\newcommand{\tr}{\mathrm{tr}}
\newcommand{\diag}{\mathrm{diag}}
\renewcommand{\vec}[1]{\boldsymbol{#1}}
\renewcommand{\v}[1]{\vec{#1}}
\newcommand{\tv}[1]{\tilde{\vec{#1}}}
\newcommand{\dv}[1]{\dot{\vec{#1}}}
\newcommand{\hv}[1]{\hat{\vec{#1}}}
\newcommand{\cv}[1]{\check{\vec{#1}}}
\renewcommand{\dh}[1]{\dot{\hat{#1}}}
\newcommand{\dt}[1]{\dot{\tilde{#1}}}
\newcommand{\dhv}[1]{\dot{\hat{\vec{#1}}}}
\newcommand{\dtv}[1]{\dot{\tilde{\vec{#1}}}}
\newcommand{\Pa}{\mathbb{P}_a}
\newcommand{\Ps}{\mathbb{P}_s}
\newcommand{\inrt}{^{\mathcal{I}}}
\newcommand{\crs}{_{\times}}
\newcommand{\err}{_{\mathrm{err}}}
\newcommand{\rf}{_{\mathrm{ref}}}
\newcommand{\ddt}{\frac{\mathrm{d}}{\mathrm{d}t}}
\newcommand{\dddtdt}{\frac{\mathrm{d}^2}{\mathrm{d}t^2}}
\newtheorem{lem}{Lemma}
\newtheorem{thm}{Theorem}
\begin{document}

\title{Generalized Nonlinear Complementary Attitude~Filter}

\author{Kenneth J. Jensen\footnote{Control Systems Engineer, Makani Power Inc., 2175 Monarch St. Alameda, CA.}}
\affiliation{Makani Power Inc., Alameda, CA, 94501}

\maketitle

\section{Introduction}
The multiplicative extended Kalman filter (MEKF) is a proven
attitude estimation technique\cite{Leff82,Mark03}, that is the {\em
de facto} standard for low-cost inertial measurement units (IMUs).
However, the MEKF does have its drawbacks. It is computationally
expensive, difficult to tune, and potentially subject to divergence
due to numerical errors.  This has lead to the development of
alternate attitude estimation algorithms such as the unscented
Kalman filter \cite{Cras03,Juli04}, generalized MEKF \cite{Mart10},
invariant extended Kalman filter \cite{Bonn09b}, particle filters
\cite{Chen04,MaJi05}, and a host of nonlinear observers
\cite{Salc91,Thei03,Camp06,Bonn08,Maho08,Vasc08,Bonn09,Zama10}.
These techniques and others are summarized in a review article
\cite{Cras07} by Crassidis {\em et al.}.

A relatively new and popular estimation technique is Mahony's
nonlinear complementary filter \cite{Maho05,Maho08}.  This filter
boasts computational efficiency, a small number of easily tuned,
intuitive parameters, and a proof of almost global asymptotic
stability. Moreover, this filter has been shown to perform similarly
to the traditional MEKF \cite{Maho10}.

This work describes a new attitude estimation technique that is a
generalization of Mahony's nonlinear complementary filter.
Interestingly, this generalized attitude filter shares a close
mathematical relationship with the MEKF, and indeed both the MEKF
without gyro bias correction and the constant gain MEKF with gyro
bias correction may be viewed as special cases of the generalized
attitude filter.

\section{Attitude Estimation} \label{sec:attest}

The attitude estimation problem consists of combining measurements
from various potentially imperfect sensors located onboard an object
({\em e.g.} vehicle, aircraft, {\em etc}...) into an accurate
estimate of the attitude of the object. There are many versions of
this problem, which vary in parameters such as the attitude
representation, object dynamics, and measurement models.

The two primary attitude representations used in this work are the
direction cosine matrices for a global representation and the Euler
vector for small changes in attitude near the identity.  The global
attitude, meaning the rotation from the inertial frame to the frame
that rotates with the object ({\em i.e.} body frame), is denoted by
$C$.  From another viewpoint, $C$ represents the transformation from
body to inertial coordinates: $C \v{v} = \v{v}\inrt$.  (As described
in the Appendix, the notation $\inrt$ denotes vectors or matrices in
inertial coordinates; otherwise, vectors or matrices should be
assumed to be in body coordinates.) Small changes in attitude are
denoted by $\tilde{C}$ or by the Euler vector $\v{x}$ where
$\tilde{C} \approx I + \v{x}\crs$.

No assumptions are made as to the dynamics of the rotating object.
Rather, the process equations are based on the kinematics of the
rotation group itself.  Specifically, the attitude $C$ of an object
with angular velocity $\v{\omega}$, measured in the body frame,
evolves according to
\begin{equation} \label{eqn:kin}
\dot{C} = C \v{\omega}\crs
\end{equation}
The generality of the kinematics allows these results to apply to a
wide range of attitude estimation problems.

Finally, this work assumes a typical set of measurements, including an
angular rate measurement from a 3-axis rate gyro and a set of vector
measurements from, for example, a 3-axis magnetometer measuring
Earth's magnetic field and a 3-axis accelerometer measuring, with
appropriate filtering, Earth's gravity vector.  The angular rate
measurement, $\cv{\omega}$, is corrupted from the true angular rate,
$\v{\omega}$, by additive zero-mean white noise, $\v{\eta}_{\omega}$,
and a slowly varying bias error, $\v{b}$, which is driven by a white
noise process, $\v{\eta}_b$. Similarly, the vector measurements,
$\cv{v}_n$, of inertial vectors in the body frame are corrupted from
the true vectors, $C^T \v{v}\inrt_n$ by additive white noise,
$\v{\eta}_{v_n}$. Finally, it is assumed that the vector measurements
have all been normalized to be unit vectors.  To summarize, the
angular rate and vector measurement models used in this work are
\begin{subequations}
\label{eqn:measmod}
\begin{align}
\cv{\omega} &= \vec{\omega} + \vec{b}  + \v{\eta}_{\omega},\;\;\;\; \dv{b} = \v{\eta}_b \\
\cv{v}_n &= C^T \vec{v}\inrt_n +
\v{\eta}_{v_n},\;\;\;\;|\v{v}\inrt_n| = 1
\end{align}
\end{subequations}
The kinematics (\ref{eqn:kin}) and measurement model
(\ref{eqn:measmod}) completely specify the attitude estimation
problem.

\section{Multiplicative Extended Kalman Filter} \label{sec:mekf}

The MEKF applies the extended Kalman filter (EKF) formalism
\cite{Gelb74} to the attitude estimation problem. Although the MEKF
typically uses quaternions as its attitude representation, it is
reformulated here in terms of direction cosine matrices and Euler
vectors to facilitate comparison with the nonlinear complementary
filter.  After the derivation of the process equations for the Euler
vector, this description of the MEKF closely follows the form given
in \cite{Mark03}.

The true attitude $C$, estimated attitude $C\rf$, and error in
attitude $\tilde{C}$ are related according to
\begin{equation}
\tilde{C} = C\rf^T C = I + \v{x}\crs + \ldots \label{eqn:roterr}
\end{equation}
where $\tilde{C}$ is linearized about the identity with the
three-component Euler vector, $\v{x}$, as discussed earlier. The
process equations for the Euler vector are determined by the
kinematics of the rotation group (\ref{eqn:kin}).  These kinematics
apply equally well to the reference attitude: $\dot{C}\rf = C\rf
(\v{\omega}\rf)\crs$, where $\v{\omega}\rf$ is the angular velocity
of the reference attitude.  Thus,
\begin{align*}
\dt{C} &= \dot{C}\rf^T C + C\rf^T \dot{C} \\
&= -(\v{\omega}\rf)\crs \tilde{C} + \tilde{C} \v{\omega}\crs \\
&= (\v{\omega} - \v{\omega}\rf)\crs + \left[ \v{x}\crs, \frac{1}{2}(\v{\omega} + \v{\omega}\rf)\crs \right] \\
&\;\;\;\;+ \left\{ \v{x}\crs, \frac{1}{2}(\v{\omega} -
\v{\omega}\rf)\crs \right\} + \ldots
\end{align*}
The symmetric terms are second or higher order in $\v{x}\crs$.
Retaining only the first-order anti-symmetric terms of
(\ref{eqn:roterr}) yields the equation for the evolution of the
Euler vector:
\begin{equation}\label{eqn:procperfmeas}
\dv{x} \approx \v{\omega} - \v{\omega}\rf - \frac{1}{2}(\v{\omega} +
\v{\omega}\rf) \times \v{x}.
\end{equation}

It is now possible to apply the EKF formalism to form an estimate,
$\hv{x}$, of this attitude error vector.  Following the typical
procedure for the MEKF, $\v{\omega}\rf$ is set such that $\langle
\hv{x} \rangle =\v{0}$.  Thus, $\v{\omega}\rf$ represents an
estimate of the error between the current reference attitude $C\rf$
and the true attitude, $C$, which may now be estimated simply by
integrating (\ref{eqn:kin}) with $\v{\omega} = \v{\omega}\rf$.
Finally, similar to the results in \cite{Mark03}, the defining
equations for the continuous-time MEKF are:
\begin{subequations}
\label{eqn:mekf}
\begin{align}
\v{\omega}\rf &= \cv{\omega} - \hv{b} + P_a \sum_n \sigma^{-2}_{v_n} \left( \cv{v}_n \times {\v{v}\rf}_n \right) \label{eqn:mekf_a} \\
\dhv{b} &= P_c^T \sum_n \sigma^{-2}_{v_n} \left( \cv{v}_n \times
{\v{v}\rf}_n \right) \label{eqn:mekf_b}
\end{align}
\end{subequations}
where ${\v{v}\rf}_n = C^T\rf \v{v}\inrt_n$ and $\sigma_{v_n}$ is the
measurement noise covariance based on $\v{\eta}_{v_n}$. Also, the
covariance matrix, $P$, has been partitioned into $3\times 3$ blocks
\begin{equation*}
P =\left[\begin{array}{cc}
P_a & P_c\\
P_c^T & P_b%
\end{array}\right]
\end{equation*}
with $P_a$ representing the attitude covariances and $P_b$
representing the bias covariances. The covariance matrix evolves
according to the standard Ricatti equation, which becomes
\begin{align} \label{eqn:Pdyn}
\dot{P} =& \left[\begin{array}{cc}
[P_a, \v{\omega}'\crs] - 2 \Ps(P_c) & -\v{\omega}'\crs P_c - P_b\\
P_c^T \v{\omega}'\crs - P_b & 0\end{array}\right] \nonumber\\
&+ \left[\begin{array}{cc}
\sigma_{\omega}^2 I & 0\\
0 & \sigma_b^2 I\end{array}\right] \nonumber \\
&+ \sum_n \sigma_{v_n}^{-2} \left[\begin{array}{cc}
P_a ({\v{v}\rf}_n)^2\crs P_a & P_a ({\v{v}\rf}_n)^2\crs P_c \\
P_c^T ({\v{v}\rf}_n)^2\crs P_a & P_c^T ({\v{v}\rf}_n)^2\crs P_c \\
\end{array}\right]
\end{align}
where $\v{\omega}' = \frac{1}{2}(\cv{\omega} + \v{\omega}\rf -
\hv{b})$ and $\sigma_{\omega}^2$ and $\sigma_b^2$ are process noise
covariances based on $\v{\eta}_{\omega}$ and $\v{\eta}_b$
respectively.

For reasons discussed in Section \ref{sec:genfilt} below, the full
continuous-time MEKF is not a special case of the generalized
attitude filter.  However, two important special cases of the MEKF
are related to the generalized attitude filter.  These are the
bias-free MEKF and the constant gain MEKF.  The bias-free MEKF
simply assumes that the gyro measurement has no bias term,
$\hv{b}=\v{0}$.  The constant gain MEKF, which is often used to
reduce computational load, retains the gyro bias term but assumes
that the covariance matrix is constant at the value it would
approach as $t \rightarrow \infty$ with $\v{\omega}=\v{0}$.

\section{Generalized Nonlinear Complementary Attitude Filter}
\label{sec:genfilt}

The nonlinear complementary filters are a set of attitude estimators
inspired by traditional linear complementary filters.  Like a linear
complementary filter, the nonlinear complementary filters combine
low frequency attitude information from a set of vector measurements
with high frequency attitude information from rate gyros.  An
interesting feature of this family of estimators is the ability to
prove the almost global asymptotic stability of the estimator. This
work demonstrates how the stability proof may be extended to a
generalization of the nonlinear complementary filters. Moreover, it
is shown that the deterministic \cite{Reif99} forms of the MEKF
without gyro biases and the constant gain MEKF with gyro biases are
special cases of this generalized attitude filter, and thus, it is
proven that these forms of the MEKF are almost globally
asymptotically stable.

The form for Mahony's explicit complementary filter \cite{Maho08} is
\begin{subequations}
\label{eqn:mahncf}
\begin{alignat}{2}
\dh{C} &= \hat{C} \left( \cv{\omega} - \hv{b} + k_P \v{\omega}\err \right)\crs, \;\;\; &k_P>0 \\
\dhv{b} &= -k_I \v{\omega}\err, \;\;\; &k_I>0 \\
\v{\omega}\err &= \sum_n k_n \cv{v}_n \times \hv{v}_n, \;\;\; &k_n>0
\end{alignat}
\end{subequations}
where $\hv{v}_n \equiv \hat{C}^T \vec{v}\inrt_n$.  This filter may
be generalized simply by replacing the positive constant scalar
gains, $k_P$ and $k_I$, by potentially time-varying
positive-definite matrix gains, $K_P$ and $K_I$.
\begin{subequations}
\label{eqn:genfilt}
\begin{alignat}{2}
\dh{C} &= \hat{C} \left(  \cv{\omega} - \hv{b} + K_P \vec{\omega}\err \right)\crs,\;\;\; &K_P>0 \\
\dhv{b} &= -K_I \vec{\omega}\err, &K_I>0 \\
\vec{\omega}\err &= \sum_n k_n \cv{v}_n \times \hv{v}_n, &k_n>0
\end{alignat}
\end{subequations}
At this point, it is interesting to note that the $( \cv{\omega} -
\hv{b}) + K_P \vec{\omega}\err$ term from this filter is identical
to the $\vec{\omega}_{\mathrm{ref}}$ term (\ref{eqn:mekf_a}) from
the MEKF, including the positive-definite nature of the covariance
matrix. Moreover, the equation for $\dhv{b}$ here is similar to
(\ref{eqn:mekf_b}), the equation for the derivative of the bias
estimate in the MEKF. However, an important distinction is that the
integral matrix gain, $-P_c^T$, in the MEKF is not necessarily
positive definite.

It is easier to analyze the stability properties of the error
dynamics of the filter rather than the filter itself. Thus, the
following definitions for the errors between the true and estimated
attitudes and biases are introduced.
\begin{subequations}
\label{eqn:errdef}
\begin{align}
\tilde{C} &= \hat{C}^T C\\
\tv{b} &= \vec{b} - \hv{b}
\end{align}
\end{subequations}
The definition for the attitude error, $\tilde{C}$, here is the same
as the definition used in the MEKF.  By these definitions, the
filter has converged to true attitude and bias when $(\tilde{C},
\tv{b}) = (I, \v{0})$.

Combining the definitions for the error terms (\ref{eqn:errdef}),
the definition of the generalized attitude filter
(\ref{eqn:genfilt}), and the measurement models (\ref{eqn:measmod})
with no noise terms yields the equations for the error dynamics:
\begin{subequations}
\label{eqn:errdyn}
\begin{align}
\dt{C} &= [\tilde{C}, \vec{\omega}\crs] - (\tv{b} + K_P \vec{\omega}\err)\crs \tilde{C} \\
\dtv{b} &= K_I \vec{\omega}\err \\
\vec{\omega}\err &= \sum_n k_n \vec{v}_n \times \tilde{C} \vec{v}_n
\end{align}
\end{subequations}

As demonstrated in the proof below, the equilibria of the error
dynamics, denoted $\tilde{C}_*$, are determined by the inertial
vectors $\v{v}_n\inrt$ and the weights $k_n$, or more precisely by
the matrix $M$:
\begin{equation}
M = C^T M\inrt C \;\;\; \mathrm{with}\;\;\; M\inrt \equiv \sum_n k_n
\vec{v}\inrt_n (\vec{v}\inrt_n)^T. \label{eqn:Mdef}
\end{equation}
The stability proof relies on $M$ being positive semi-definite with
distinct eigenvalues, which as shown in \cite{Maho08}, is true if
there are at least two non-parallel measurement vectors.
Intuitively, the equilibria occur when the attitude error,
$\tilde{C}$, is the identity rotation ({\em i.e.} the filter has
converged) or a rotation of $\pi$ rad about one of the principle
axes of $M$. More specifically, the equilibria occur when
$\Pa(\tilde{C} M) = 0$, which according to Lemma \ref{lem:cstar}
below, implies that the equilibria are given by $\tilde{C}_{*i}
\equiv U D_i U^T$ where $D_0 = I$, $D_1 = \diag(1,-1,-1)$, $D_2 =
\diag(-1,1,-1)$, and $D_3 = \diag(-1,-1,1)$ for $M=U \Lambda U^T$
with diagonal $\Lambda$ and orthogonal $U$. The proof below first
demonstrates that the equilibria indeed occur at $\tilde{C}_{*i}$,
and then analyzes the stability properties of each equilibrium.

The following proof of the stability characteristics of the
generalized attitude filter is substantially similar to the proofs
given in \cite{Maho08}.  The primary difference is the extension of
the proof to handle potentially time-varying matrix gains rather
than constant scalar gains.  Some results from \cite{Maho08}, such
as the following lemma, transfer with no modification, and
consequently are simply restated here.  The stability properties of
the generalized attitude filter are now characterized with the
following theorems.

\begin{lem} \label{lem:cstar}
Suppose $\tilde{C} \in SO(3)$ and $M$ is positive semi-definite with
distinct eigenvalues and decomposition $M=U \Lambda U^T$ for
orthogonal $U$ and diagonal $\Lambda$. Then, $\Pa(\tilde{C} M) = 0$
if and only if $\tilde{C} = \tilde{C}_{*i} \equiv U D_i U^T$ where
$D_0 = I$, $D_1 = \diag(1,-1,-1)$, $D_2 = \diag(-1,1,-1)$, and $D_3
= \diag(-1,-1,1)$. \cite{Maho08}
\end{lem}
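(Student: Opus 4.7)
The plan is to move to the eigenbasis of $M$ and reduce the lemma to classifying orthogonal matrices $R$ for which $R\Lambda$ is symmetric. First, observe that $\Pa(\tilde{C}M) = 0$ is equivalent to $\tilde{C}M = M\tilde{C}^T$ (using $M^T = M$). Setting $R \equiv U^T \tilde{C} U \in SO(3)$ and inserting $M = U\Lambda U^T$, this rewrites as $R\Lambda = \Lambda R^T$. So it suffices to classify the $R \in SO(3)$ satisfying $R\Lambda = \Lambda R^T$; each such $R$ then gives $\tilde{C} = U R U^T$.

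Next, I would square the identity to eliminate $R^T$. Substituting $R\Lambda = \Lambda R^T$ into the first factor of $(R\Lambda)^2$ and using $R^T R = I$ gives $(R\Lambda)^2 = \Lambda R^T R \Lambda = \Lambda^2$, so $S \equiv R\Lambda$ is a symmetric matrix satisfying $S^2 = \Lambda^2$. Since $M$ is positive semi-definite with distinct eigenvalues, the diagonal entries of $\Lambda$ are distinct and non-negative, so $\Lambda^2$ also has three distinct eigenvalues. Any symmetric matrix commuting with $\Lambda^2$ must preserve each of its one-dimensional eigenspaces and is therefore diagonal, so $S = \diag(s_1, s_2, s_3)$ with $s_i = \pm \lambda_i$.

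Finally, I would recover $R$ and impose $R \in SO(3)$. When all $\lambda_i > 0$, inverting gives $R = S\Lambda^{-1} = \diag(\pm 1, \pm 1, \pm 1)$; the constraint $\det R = +1$ selects exactly the four matrices $D_0, D_1, D_2, D_3$. The converse is immediate: for $\tilde{C} = U D_i U^T$ the product $\tilde{C} M = U D_i \Lambda U^T$ is symmetric because $D_i \Lambda$ is diagonal. The case in which one eigenvalue vanishes requires a separate word but is easily handled: if $\lambda_3 = 0$, writing out row $3$ of $R\Lambda = \Lambda R^T$ forces $r_{31} = r_{32} = 0$, orthogonality then gives $r_{33} = \pm 1$, and the upper $2 \times 2$ block reduces to the invertible case with the same parity argument. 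I expect the only delicate point to be exactly this distinctness assumption: if $\Lambda^2$ had a repeated eigenvalue, the symmetric square roots of $\Lambda^2$ would form a continuous family and the discrete set of four equilibria would be replaced by a continuum, so the hypothesis that $M$ has distinct eigenvalues is doing genuine work.
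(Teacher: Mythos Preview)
Your argument is correct. The reduction to the eigenbasis, the observation that $S=R\Lambda$ is a symmetric square root of $\Lambda^2$, and the use of distinctness of the $\lambda_i^2$ to force $S$ diagonal are all sound; the semi-definite case with one zero eigenvalue is handled cleanly by reading off the third row and then reducing to the $2\times 2$ invertible situation. One small point worth making explicit: you invoke that $S$ commutes with $\Lambda^2$, which follows immediately from $S\Lambda^2 = S S^2 = S^2 S = \Lambda^2 S$, but you might state it rather than leave it implicit.

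As for comparison with the paper: there is nothing to compare against. The paper does not prove Lemma~\ref{lem:cstar}; it merely restates it with a citation to Mahony \emph{et al.}\ \cite{Maho08}, noting explicitly that this result ``transfers with no modification'' and is ``simply restated here.'' Your write-up therefore supplies a self-contained proof where the paper offers none, and the diagonalize--square--commute strategy you use is essentially the standard one for this kind of statement.
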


\begin{thm}[Stability of the generalized attitude filter] \label{thm:asymstab}
Consider the error dynamics described by (\ref{eqn:errdyn}). Suppose
that $K_P$, $K_I$, and $M$ are positive definite, $\dot{K}_I$ is
positive semi-definite, and $k_n>0$. Further, suppose that $K_P$ and
$K_I$ are upper and lower bounded by positive constants;
$\dot{K}_P$, $\dot{K}_I$, $\ddot{K}_I$, and $\vec{\omega}$ are
bounded; and $M$ has distinct eigenvalues. Then, the equilibrium
point $(\tilde{C}, \tv{b}) = (I, \vec{0})$ of the error dynamics is
asymptotically stable with a domain of attraction
$\mathcal{D}=\left\{ (\tilde{C}, \tv{b}) \in SO(3) \times
\mathbb{R}^3 - \{ (\tilde{C}_{*i}, \v{0})\, |\, i=1,2,3\} \right\}$
and is locally exponentially stable.
\end{thm}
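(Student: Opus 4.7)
My plan is to adapt the Lyapunov argument of Mahony \emph{et al.} \cite{Maho08} to the matrix-gain setting, using the candidate
\begin{equation*}
V(\tilde{C},\tv{b}) \;=\; \tr\!\bigl((I-\tilde{C})M\bigr) \;+\; \tfrac{1}{2}\,\tv{b}^{T} K_I^{-1}\tv{b},
\end{equation*}
which is non-negative because $M\succeq 0$ makes $\tr((I-\tilde{C})M)\ge 0$ on $SO(3)$ and because $K_I^{-1}\succ 0$, and which vanishes precisely at $(\tilde{C},\tv{b})=(I,\v{0})$.

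Computing $\dot{V}$ along (\ref{eqn:errdyn}) is the main algebraic step. Two observations are central: since $M\inrt$ is constant, the body-frame matrix $M=C^{T}M\inrt C$ satisfies $\dot{M}=[M,\v{\omega}\crs]$; and by the identity $\v{\omega}\err \propto \vex(\Pa(\tilde{C}M))$ that underlies Lemma~\ref{lem:cstar}, traces of the form $\tr\!\bigl(\v{a}\crs\tilde{C}M\bigr)$ collapse to dot products with $\v{\omega}\err$. Expanding $\tfrac{d}{dt}\tr((I-\tilde{C})M)=-\tr(\dot{\tilde{C}}M)-\tr(\tilde{C}\dot{M})$, the $\v{\omega}$-commutator contributions cancel by cyclic permutation of the trace, and combining with $\tfrac{d}{dt}(K_I^{-1})=-K_I^{-1}\dot{K}_I K_I^{-1}$ and $\dtv{b}=K_I\v{\omega}\err$, the cross terms $\pm\tv{b}^{T}\v{\omega}\err$ also cancel, leaving
\begin{equation*}
\dot{V} \;=\; -\,\v{\omega}\err^{T} K_P \v{\omega}\err \;-\; \tfrac{1}{2}\,\tv{b}^{T}K_I^{-1}\dot{K}_I K_I^{-1}\tv{b} \;\le\; 0
\end{equation*}
under the hypotheses $K_P\succ 0$ and $\dot{K}_I\succeq 0$.

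Next I would invoke Barbalat's lemma. Since $V\ge 0$ and $\dot{V}\le 0$, $V$ has a finite limit. The boundedness assumptions on $\v{\omega}$, $K_P$, $K_I$ (both above and below), $\dot{K}_P$, $\dot{K}_I$, and $\ddot{K}_I$ are precisely what is needed to bound $\ddot{V}$, so $\dot{V}\to 0$, whence $\v{\omega}\err\to\v{0}$. By Lemma~\ref{lem:cstar}, $\tilde{C}$ then accumulates in the finite set $\{\tilde{C}_{*i}:i=0,1,2,3\}$. Substituting $\v{\omega}\err=\v{0}$ into the $\dt{C}$ equation at $\tilde{C}=I$ forces $\tv{b}\crs=0$, so on the identity equilibrium one also has $\tv{b}\to\v{0}$.

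Finally I would address the stability of each equilibrium. Linearizing near $(I,\v{0})$ via $\tilde{C}\approx I+\v{x}\crs$ yields $\v{\omega}\err\approx N\v{x}$ with $N\equiv\tr(M)I-M\succ 0$ (its eigenvalues are $\sum_{j\ne i}\lambda_j(M)$), giving the first-order system
\begin{equation*}
\begin{bmatrix}\dv{x}\\ \dtv{b}\end{bmatrix} \approx \begin{bmatrix}-K_P N - \v{\omega}\crs & -I\\ K_I N & 0\end{bmatrix}\begin{bmatrix}\v{x}\\ \tv{b}\end{bmatrix},
\end{equation*}
whose local exponential stability I would establish by augmenting $V$ with a small cross term $\epsilon\,\v{x}^{T}\tv{b}$ to produce a $\dot{V}$ that is negative definite in $(\v{x},\tv{b})$. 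For each non-identity equilibrium $\tilde{C}_{*i}$ ($i=1,2,3$), the same linearization has at least one eigenvalue in the open right half-plane, so these points are unstable and their stable manifolds have empty interior in $SO(3)\times\mathbb{R}^{3}$; combined with the LaSalle-type conclusion above, this yields the stated domain of attraction $\mathcal{D}$. I expect the main obstacle to be carrying out the trace manipulation cleanly enough to witness the cancellation of the $\v{\omega}$-terms in $\dot{V}$, and verifying that the specified boundedness hypotheses on the time-varying gains are indeed sufficient to legitimize Barbalat; enumerating the equilibria via Lemma~\ref{lem:cstar} and showing the instability of $\tilde{C}_{*i}$ for $i\ne 0$ follows the pattern of \cite{Maho08} with minor modification.
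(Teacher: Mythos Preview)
Your overall plan---the candidate $V=\tr((I-\tilde{C})M)+\tfrac{1}{2}\tv{b}^{T}K_I^{-1}\tv{b}$, the cancellation of the $\v{\omega}$-commutator via cyclicity, Barbalat, and then a local analysis at each equilibrium---is the same architecture the paper uses. Your expression $\dot{V}=-\v{\omega}\err^{T}K_P\v{\omega}\err-\tfrac{1}{2}\tv{b}^{T}K_I^{-1}\dot{K}_I K_I^{-1}\tv{b}$ is in fact \emph{cleaner} than the paper's: the paper stops at $\tr\bigl((K_P\v{\omega}\err)\crs\Pa(\tilde{C}M)\bigr)$ and then spends a paragraph with an auxiliary matrix $Q=\pm\sqrt{\det K_P}\,K_P^{-1}$ to show this term is nonpositive, whereas you simply use $(\v{\omega}\err)\crs=2\Pa(\tilde{C}M)$ together with $\tr(\v{a}\crs\v{b}\crs)=-2\v{a}^{T}\v{b}$ to collapse it directly. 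That is a genuine simplification.

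There is, however, a real gap in your treatment of the non-identity equilibria. You write that ``the same linearization has at least one eigenvalue in the open right half-plane, so these points are unstable.'' But the linearization here is \emph{time-varying}: $K_P$, $K_I$, and (in body frame) $M$ and $\v{\omega}$ all depend on $t$, so pointwise spectral information about the system matrix does not imply instability. The paper handles this correctly by a Chetaev-type argument: it passes to the inertial frame (where $M\inrt$ and $\tilde{C}_{*i}\inrt$ are constant), introduces the indefinite function $s=\tfrac{1}{2}(\v{x}_i\inrt)^{T}A_i\inrt\v{x}_i\inrt+\tfrac{1}{2}(\v{y}\inrt)^{T}(K_I\inrt)^{-1}\v{y}\inrt$ with $A_i\inrt=\tilde{C}_{*i}\inrt(\tr(M\inrt)I-M\inrt)$, and shows $\dot{s}\le 0$ for all $t$; since $A_i$ has a negative eigenvalue for $i=1,2,3$, there are initial conditions arbitrarily near the equilibrium with $s<0$, and the trajectory must exit any small ball. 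Your plan should replace the eigenvalue remark with this kind of Lyapunov/Chetaev argument. A smaller point: you only argue $\tv{b}\to\v{0}$ at the identity equilibrium, but to identify the limit set as $\{(\tilde{C}_{*i},\v{0})\}$ you need $\tv{b}\to\v{0}$ at every $\tilde{C}_{*i}$; the paper gets this by noting $\dt{C}_{*i}=[\tilde{C}_{*i},\v{\omega}\crs]$ and comparing with the $\dt{C}$ equation once $\v{\omega}\err\to\v{0}$.
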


\begin{proof}
Consider the Lyapunov function candidate
\begin{equation}
v = \sum_n k_n - \tr(\tilde{C} M) + \frac{1}{2} \tilde{\vec{b}}^T
K_I^{-1} \tilde{\vec{b}}
\end{equation}
Following the standard procedure, the time derivative of $v$ is
calculated:
\begin{align}
\dot{v} &= - \tr( \dt{C} M + \tilde{C} \dot{M} ) + \tv{b}^T K_I^{-1} \dtv{b} + \frac{1}{2} \tv{b}^T \ddt K_I^{-1} \tv{b} \nonumber \\
&= -\tr \left( [\tilde{C} M, \vec{\omega}\crs] - (\tv{b} + K_P \vec{\omega}\err)\crs \tilde{C} M \right) \nonumber\\
&\;\;\;\;+ \tv{b}^T \vec{\omega}\err  - \frac{1}{2} \tv{b}^T K_I^{-1} \dot{K}_I K_I^{-1} \tv{b} \nonumber \\
&= \tr \left( (\tv{b} + K_P \vec{\omega}\err)\crs \Pa(\tilde{C} M) \right) - \frac{1}{2} \tr \left( \tv{b}\crs (\vec{\omega}\err)\crs  \right) \nonumber\\
&\;\;\;\;- \frac{1}{2} \tv{b}^T  K_I^{-1} \dot{K}_I K_I^{-1} \tv{b} \nonumber \\
&= \tr \left( (K_P \vec{\omega}\err)\crs \Pa(\tilde{C} M) \right) -
\frac{1}{2} \tv{b}^T  K_I^{-1} \dot{K}_I K_I^{-1} \tv{b}
\label{eqn:vdotnotsimp}
\end{align}
where the identity (\ref{eqn:traceident}) from the Appendix and the
result $(\v{\omega}\err)\crs = 2\Pa(\tilde{C}M)$, also obtainable
from the identities (\ref{eqn:crossidents}), were used in the
simplification. As $\dot{K}_I \ge 0$, the second term in
(\ref{eqn:vdotnotsimp}) is negative semi-definite. To show that the
first term is also negative semi-definite, it is useful to rewrite
$K_P \vec{\omega}\err$ in terms of the measurement vectors
\begin{align}
(K_P \vec{\omega}\err)\crs &= \left( \sum_n k_n (Q \v{v}_n) \times (Q \tilde{C} \vec{v}_n) \right)\crs \nonumber\\
&= \sum_n k_n \left( Q \tilde{C}\vec{v}_n (\vec{v}_n)^T Q^T - Q \vec{v}_n (\tilde{C}\vec{v}_n)^T Q^T \right) \nonumber\\
&= 2 Q \Pa(\tilde{C} M ) Q^T \nonumber
\end{align}
where $Q = \pm \sqrt{\det K_P} K_P^{-1}$ is a positive or negative
definite matrix.  Then, using the fact that $Q$ has a decomposition
$Q=S^T \Lambda S$ for orthogonal $S$ and diagonal $\Lambda$:
\begin{align}
&\tr \left( (K_P \vec{\omega}\err)\crs \Pa(\tilde{C} M) \right) \nonumber\\
&\;\;\;\;= 2\, \tr \left( Q \Pa(\tilde{C} M) Q^T \Pa(\tilde{C} M) \right) \nonumber \\
&\;\;\;\;= 2\, \tr \left( (S^T \Lambda S) \Pa(\tilde{C} M) (S^T \Lambda S) \Pa(\tilde{C} M) \right) \nonumber \\
&\;\;\;\;= 2\, \tr \left( \Lambda (S \Pa(\tilde{C} M) S^T) \Lambda (S \Pa(\tilde{C} M) S^T) \right) \nonumber \\
&\;\;\;\;= 2\, \tr \left( (\Lambda \Pa( S \tilde{C} M S^T ))^2
\right) \le 0 \label{eqn:trle0}
\end{align}
This results in the trace of the square of the product of a diagonal
matrix and an antisymmetric matrix, which is easily shown to be
negative semi-definite.  Thus, combining (\ref{eqn:trle0}) and
(\ref{eqn:vdotnotsimp}), it is shown that $\dot{v} \le 0$.  Finally,
with the assumptions on the boundedness of $K_P$, $K_I$,
$\dot{K}_P$, $\dot{K}_I$, $\ddot{K}_I$, and $\vec{\omega}$, it is
straightforward to show that $\ddot{v}$, given by
\begin{align}
\ddot{v} =& 4\, \tr \left( Q \Pa(\tilde{C} M) \left(\dot{Q} \Pa(\tilde{C} M) \right.\right. \nonumber\\
& \left.\left. + Q \Pa( [\tilde{C} M, \vec{\omega}\crs] - 2Q \Pa(\tilde{C}M) Q^T \tilde{C} M) \right)\right) \nonumber\\
&+ \tv{b}^T \ddt K_I^{-1} \dtv{b}  + \frac{1}{2} \tv{b}^T \dddtdt
K_I^{-1} \tv{b} \nonumber
\end{align}
is bounded.  Therefore, Barbalat's lemma \cite{Khal96} implies that
$\dot{v}$ and thus $\Pa(\tilde{C} M)$ tend asymptotically to zero.
According to Lemma \ref{lem:cstar}, the attitude error, $\tilde{C}$
must approach one of the equilibria $\tilde{C}_{*i}$.  Moreover,
$\Pa(\tilde{C} M) \rightarrow 0$ implies that $\v{\omega}\err
\rightarrow 0$, and thus, after substituting the relation
$\dt{C}_{*i} = [\tilde{C}_{*i}, \v{\omega}\crs]$ into
(\ref{eqn:errdyn}), it is shown that $\tv{b}$ must also tend
asymptotically to zero.

To demonstrate the stability characteristics of the various
equilibria, the system is linearized about each $\tilde{C}_{*i}$.
Because the equilibria attitude errors are constant in the inertial
frame ({\em i.e.} $\dt{C}_{*i}\inrt = 0$), it is easiest to conduct
the linearization in the inertial frame. The error dynamics
(\ref{eqn:errdyn}) expressed in the inertial frame are
\begin{subequations}
\label{eqn:errdyninrt}
\begin{align}
\dt{C}\inrt &= -(\tv{b}\inrt + K_P\inrt \vec{\omega}\err\inrt)\crs \tilde{C}\inrt \\
\dtv{b}\inrt &= \v{\omega}\crs\inrt \tv{b}\inrt + K_I\inrt
\v{\omega}\err\inrt
\end{align}
\end{subequations}
where $\tilde{C}\inrt = C \tilde{C} C^T$ as described in the
Appendix. Let $\tilde{C}\inrt \approx \tilde{C}_{*i}\inrt (1 +
\v{x}\crs\inrt)$ and $\tv{b}\inrt \approx -\v{y}\inrt$.  First, the
measurement error vector is linearized:
\begin{align}
\v{\omega}\err\inrt &= \sum_n k_n \v{v}_n\inrt \times \tilde{C}\inrt \v{v}_n\inrt \nonumber \\
&\approx \sum_n k_n \v{v}_n\inrt \times \tilde{C}_{*i}\inrt (1+\v{x}\inrt\crs) \v{v}_n\inrt \nonumber \\
&= \sum_n k_n \v{v}_n\inrt \times \tilde{C}_{*i}\inrt \v{v}_n\inrt - \sum_n k_n \v{v}_n\inrt \times \tilde{C}_{*i}\inrt (\v{v}_n\inrt)\crs \v{x}\inrt \nonumber \\
&= - \left( \sum_n k_n (\tilde{C}_{*i}\inrt \v{v}_n\inrt)\crs (\v{v}_n\inrt)\crs \right) \v{x}\inrt \nonumber \\
&= \tilde{C}_{*i}\inrt \sum_n k_n \left( (\v{v}_n\inrt)^T \tilde{C}_{*i}\inrt \v{v}_n\inrt I - \v{v}_n\inrt (\v{v}_n\inrt)^T \tilde{C}_{*i}\inrt \right) \v{x}\inrt \nonumber \\
&= \tilde{C}_{*i}\inrt \left( \tr(M\inrt) I - M\inrt \right) \tilde{C}_{*i}\inrt \v{x}\inrt \nonumber \\
&= A_i\inrt \v{x}_i\inrt \label{eqn:werrlin}
\end{align}
where $A_i\inrt=\tilde{C}_{*i}\inrt \left( \tr(M\inrt) I - M\inrt
\right)$ and $\v{x}_i\inrt = \tilde{C}_{*i}\inrt \v{x}\inrt$.
Substituting (\ref{eqn:werrlin}) into (\ref{eqn:errdyninrt}) with
the linearized $\tilde{C}\inrt$ and $\tv{b}\inrt$ results in
\begin{equation}
\left[\begin{array}{c}
\dv{x}_i\inrt\\
\dv{y}\inrt\end{array}\right] = \left[\begin{array}{cc}
-K_P\inrt A_i\inrt  & I\\
-K_I\inrt A_i\inrt  & \v{\omega}\inrt\crs%
\end{array}\right]
\left[\begin{array}{c}
\v{x}_i\inrt\\
\v{y}\inrt%
\end{array}\right].
\label{eqn:lindyn}
\end{equation}

To demonstrate the instability of the equilibria $\tilde{C}_{*i}$
for $i=1,2,3$, it is necessary to show that a trajectory starting
arbitrarily close to $\tilde{C}_{*i}$ ({\em i.e.} $|\v{\xi}\inrt|$
arbitrarily close to zero with $\v{\xi}\inrt=(\v{x}\inrt_i,
\v{y}\inrt)$) must eventually diverge from the compact set, which
contains the equilibrium, defined by $|\v{\xi}\inrt| \le r$ for some
$r$ chosen such that the linearization is still valid. Consider the
cost function
\begin{equation}
s = \frac{1}{2} (\v{x}_i\inrt)^T A_i\inrt \v{x}_i\inrt + \frac{1}{2}
(\v{y}\inrt)^T (K_I\inrt)^{-1} \v{y}\inrt
\end{equation}
The time derivative of $s$ is given by
\begin{align}
\dot{s} =& \frac{1}{2} \bigg( -(\v{x}_i\inrt)^T (A_i\inrt)^T (K_P\inrt)^T A_i\inrt \v{x}_i\inrt + (\v{y}\inrt)^T A_i\inrt \v{x}_i\inrt \nonumber\\
& - (\v{x}_i\inrt)^T A_i\inrt K_P\inrt A_i\inrt \v{x}_i\inrt + (\v{x}_i\inrt)^T A_i\inrt \v{y}\inrt - (\v{y}\inrt)^T \v{\omega}\inrt\crs (K_I\inrt)^{-1} \v{y}\inrt \nonumber \\
& - (\v{x}_i\inrt)^T (A_i\inrt)^T (K_I\inrt)^T (K_I\inrt)^{-1} \v{y}\inrt + (\v{y}\inrt)^T (K_I\inrt)^{-1} \v{\omega}\inrt\crs \v{y}\inrt \nonumber \\
& - (\v{y}\inrt)^T (K_I\inrt)^{-1} K_I\inrt A_i\inrt \v{x}_i\inrt + (\v{x}_i\inrt)^T \dot{A}_i\inrt \v{x}_i\inrt \nonumber \\
& + (\v{y}\inrt)^T \ddt (K_I\inrt)^{-1} \v{y}\inrt \bigg) \nonumber \\
=& -(\v{x}_i\inrt)^T (A_i\inrt)^T K_P\inrt A_i\inrt \v{x}_i\inrt \nonumber \\
& - \frac{1}{2} (\v{y}\inrt)^T (K_I\inrt)^{-1} (\dot{K}_I)\inrt
(K_I\inrt)^{-1} \v{y}\inrt
\end{align}
which is negative definite as $\dot{K}_I$ and thus
$(\dot{K}_I)\inrt$ are positive definite. For $i=1,2,3$, $A_i$ has
at least one negative eigenvalue, and thus there is some
$\v{\xi}\inrt_0$, with magnitude arbitrarily close to zero, for
which $s(\v{\xi}\inrt_0)<0$. As $\dot{s}<0$ and because $r$ was
chosen such that the linearization is valid, trajectories
$\v{\xi}\inrt(t)$ starting at $\v{\xi}\inrt_0$ must eventually pass
through the sphere with radius $r$. Thus, these equilibria are
unstable.

The local exponential stability of the equilibrium point
$(\tilde{C}, \tv{b}) = (I, \v{0})$ is now proven.  Take the
$\tilde{C}_{*0} = I$ case of the linearized system
(\ref{eqn:lindyn}) and return the system to body coordinates.  The
simplified linearized system is
\begin{equation} \label{eqn:simplin}
\dv{\xi} = B \v{\xi},\;\;\;\; B = \left[\begin{array}{cc}
-K_P A_0 - \v{\omega}\crs & I\\
-K_I A_0 & 0%
\end{array}\right]
\end{equation}
where  $A_0 = \tr(M)I - M$ is positive definite.  To prove the
exponential stability of the equilibrium point $\vec{\xi} =
(\vec{0}, \vec{0})$ of the linearized system (\ref{eqn:simplin}),
consider the Lyapunov function candidate
\begin{equation}
w = \frac{1}{2} \v{\xi}^T P \v{\xi},\;\;\;\; P =
\left[\begin{array}{cc}
A_0 & -\alpha A_0\\
-\alpha A_0 & K_I^{-1}%
\end{array}\right]
\end{equation}
where $\alpha$ is chosen such that $P$ is positive definite, or more
specifically such that $\alpha^2 A_0 < K_I^{-1}$. The derivative of
$w$ along the trajectories of the system is
\begin{equation}
\dot{w} = \v{\xi}^T \left( P B + B^T P + \dot{P} \right) \v{\xi} =
-\v{\xi}^T Q \v{\xi} \nonumber
\end{equation}
where
\begin{align}
Q &= -(P B + B^T P + \dot{P}) \nonumber \\
&= \left[\begin{array}{cc}
2 A_0 (K_P - \alpha K_I) A_0  &  -\alpha (A_0 K_P A_0 - A_0 \v{\omega}\crs )\\
-\alpha (A_0 K_P A_0 + \v{\omega}\crs A_0 ) & 2 \alpha A_0 + K_I^{-1} \dot{K}_I K_I^{-1}%
\end{array}\right]\nonumber
\end{align}
Using the Schur complement condition for positive definiteness, $Q$
is positive definite exactly when $K_P > \alpha K_I$ and
\begin{align}
2 \alpha A_0 &+ K_I^{-1} \dot{K}_I K_I^{-1} \nonumber\\
&> \frac{1}{2} \alpha^2 (A_0 K_P + \v{\omega}\crs)(K_P-\alpha
K_I)^{-1} (K_P A_0 - \v{\omega}\crs) \nonumber
\end{align}
With $\dot{K}_I \ge 0$, it is straightforward to show that there
exists some $\alpha>0$ such that $P>0$ and $Q>0$. Therefore,
$\dot{w}$ is upper bounded by a negative constant and the linearized
system is exponentially stable.

Together, the results on the asymptotic convergence of $(\tilde{C},
\tv{b})$ to $(\tilde{C}_{*i}, \v{0})$, the instability of the
$\tilde{C}_{*1}$, $\tilde{C}_{*2}$, and $\tilde{C}_{*3}$ equilibria,
and the stability of the $\tilde{C}_{*0}$ equilibrium show the
asymptotic stability of $(I, \v{0})$ with domain of attraction
$\mathcal{D}$.
\end{proof}

A direct corollary of Theorem \ref{thm:asymstab} is that the
deterministic \cite{Reif99} bias-free MEKF is almost globally
asymptotically stable. Simply identify $K_P \equiv P_a$ and set the
bias error in the proof to zero.  It is easy to show, given the
boundedness of $\v{\omega}$, that $P_a$ and $\dot{P}_a$ meet the
boundedness requirements of $K_P$ and $\dot{K}_P$ for the theorem.
The proof then proceeds without modification.  Another interesting
special case is that of the deterministic constant gain MEKF.  The
following theorem demonstrates that the deterministic constant gain
MEKF is almost globally asymptotically stable as well.

\begin{thm}
The deterministic constant gain MEKF defined by (\ref{eqn:mekf})
with $P_a = P_a(\infty)$ and $P_c = P_c(\infty)$ is asymptotically
stable with a domain of attraction $\mathcal{D}$ defined in Theorem
\ref{thm:asymstab}.
\end{thm}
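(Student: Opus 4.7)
The plan is to realize the constant gain MEKF as a special case of the generalized filter (\ref{eqn:genfilt}) and invoke Theorem \ref{thm:asymstab} directly. Taking the weights $k_n = \sigma_{v_n}^{-2}$ makes the two expressions for $\v{\omega}\err$ coincide, after which a term-by-term comparison of (\ref{eqn:mekf_a})--(\ref{eqn:mekf_b}) with (\ref{eqn:genfilt}) forces the identifications $K_P \equiv P_a(\infty)$ and $K_I \equiv -P_c^T(\infty)$.

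Because both matrices are constant by hypothesis, $\dot{K}_P$, $\dot{K}_I$, and $\ddot{K}_I$ vanish identically, so the requirement $\dot{K}_I \ge 0$ and the boundedness of the gain derivatives are automatic, and the positive-constant upper and lower bounds on $K_P$ and $K_I$ reduce to the positive definiteness of the two matrices. The assumptions on $M$, $k_n$, and $\v{\omega}$ are inherited directly from the problem setup, and $K_P = P_a(\infty) > 0$ is standard for the steady-state attitude covariance block of an observable system.

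The central obstacle is verifying that $K_I \equiv -P_c^T(\infty)$ has positive definite symmetric part---precisely the property flagged after (\ref{eqn:genfilt}) as the one distinction between the MEKF bias update and the generalized filter. I would attack this by setting $\dot{P} = 0$ and $\v{\omega} = \v{0}$ in (\ref{eqn:Pdyn}) and pulling the $n$-independent factors $P_a$ and $P_c$ out of the sums, which introduces the symmetric matrix
\begin{equation*}
V \equiv \sum_n \sigma_{v_n}^{-2}({\v{v}\rf}_n)^2\crs = \sum_n \sigma_{v_n}^{-2}\bigl({\v{v}\rf}_n({\v{v}\rf}_n)^T - I\bigr).
\end{equation*}
This $V$ is negative definite whenever the reference directions span $\mathbb{R}^3$, and the three block equations then collapse to $P_c^T V P_c = -\sigma_b^2 I$ from the $(2,2)$ block, $P_b = P_a V P_c$ from the $(1,2)$ block, and $2\Ps(P_c) = \sigma_\omega^2 I + P_a V P_a$ from the $(1,1)$ block.

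The key technical step is then to show $\Ps(P_c(\infty)) < 0$. In the scalar/commuting limit this is immediate: $p_b = p_a v p_c > 0$ with $p_a > 0$ and $v < 0$ forces $p_c < 0$, and substituting the $(2,2)$-block solution $p_c = -\sigma_b/\sqrt{|v|}$ into the $(1,1)$-block identity confirms that $p_a^2 |v| > \sigma_\omega^2$ holds automatically, driven by the bias noise contribution $\sigma_b > 0$. In the general non-commuting case I would extend this by exploiting the symmetry and positive definiteness of $P_b$: rewriting the $(1,2)$-block equation as $V P_c = P_a^{-1} P_b$ shows that $V P_c$ is similar to a positive definite matrix and hence has positive real eigenvalues, which, combined with $V < 0$ and the $(2,2)$-block scale $P_c^T V P_c = -\sigma_b^2 I$, forces the symmetric part of $P_c(\infty)$ to be negative definite. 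Once $K_I > 0$ is established, Theorem \ref{thm:asymstab} applies verbatim and yields asymptotic stability of $(I, \v{0})$ with domain of attraction $\mathcal{D}$ and local exponential stability.
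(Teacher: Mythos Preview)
Your strategy---identify $K_P\equiv P_a(\infty)$, $K_I\equiv -P_c^T(\infty)$ and reduce everything to showing $-P_c(\infty)>0$ via the steady-state Riccati blocks---is exactly the paper's. The setup and the three block equations you extract are correct (your $V$ is the paper's $-A_0$). The gap is in the final implication.

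Your claim is that the $(1,2)$ and $(2,2)$ blocks alone force $\Ps(P_c)<0$: from $VP_c=P_a^{-1}P_b$ you get that $VP_c$ has positive spectrum, and you assert that together with $V<0$ and $P_c^TVP_c=-\sigma_b^2 I$ this makes $\Ps(P_c)<0$. That inference fails. Writing $W=(-V)^{1/2}>0$, the $(2,2)$ block says $WP_c=\sigma_b O$ for some orthogonal $O$, so $P_c=\sigma_b W^{-1}O$; the $(1,2)$ spectral condition then says $WO$ has negative real eigenvalues. But negative real spectrum of $WO$ does \emph{not} imply $\Ps(W^{-1}O)<0$. A $2\times2$ check with $W=\diag(1,100)$ and $O$ a rotation by $\theta$ with $\cos\theta=-\tfrac12$ gives $WO$ eigenvalues $\approx -2.1,-48.4$ yet $\Ps(W^{-1}O)$ indefinite. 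So the two blocks you use are genuinely insufficient; the $(1,1)$ block is not optional.

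The paper closes this gap by first combining your $(1,2)$ and $(2,2)$ relations to get the explicit form $P_c=-\sigma_b^2 P_aP_b^{-1}$ (you are one line away: substitute $VP_c=P_a^{-1}P_b$ into $P_c^TVP_c=-\sigma_b^2I$). Substituting this into the $(1,1)$ block and using the $(2,2)$ block once more to eliminate $A_0$ yields an equation of the form $P_aP_b^{-1}+P_b^{-1}P_a=f(P_b)$, which forces $P_a$ and $P_b$ to commute. Then $P_c=-\sigma_b^2 P_aP_b^{-1}$ is a negative scalar times a product of commuting positive definite symmetric matrices, hence \emph{symmetric} negative definite---stronger than the $\Ps(P_c)<0$ you were aiming for, and in fact what Theorem~\ref{thm:asymstab} needs, since its Lyapunov function uses $\tv{b}^T K_I^{-1}\tv{b}$ and the cancellation $\tv{b}^TK_I^{-1}\dtv{b}=\tv{b}^T\v{\omega}\err$ in its derivative requires $K_I=K_I^T$.
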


\begin{proof}
The proof proceeds by demonstrating that the constant gain MEKF is a
special case of the generalized attitude filter.  It has already
been shown that the forms for the MEKF and the generalized attitude
filters are similar.  All that remains to be shown is the positive
definite nature of the matrix gains, $P_a$ and $-P_c$ in the
constant gain MEKF.

With $\v{\omega}=\v{0}$ and assuming the covariance matrix converges
then equations for the matrix gains are
\begin{subequations}
\begin{alignat}{2}
\label{eqn:Pa0}
\dot{P}_a &= -2 \Ps(P_c) + \sigma_{\omega}^2 I - P_a A_0 P_a &= 0\\
\label{eqn:Pb0}
\dot{P}_b &= \sigma_b^2 I - P_c^T A_0 P_c &= 0\\
\label{eqn:Pc0} \dot{P}_c^T &= -P_b - P_c^T A_0 P_a &= 0
\end{alignat}
\end{subequations}
where as in Theorem \ref{thm:asymstab}, $A_0 = \tr(M)I - M$. The
combining of (\ref{eqn:Pb0}) and (\ref{eqn:Pc0}) yields
\begin{equation} \label{eqn:PcPaPb}
P_c = -\sigma_b^2 P_a P_b^{-1}.
\end{equation}
Substitution into (\ref{eqn:Pa0}) results in
\begin{equation}
P_a P_b^{-1} + P_b^{-1} P_a = -\sigma_b^{-2} \sigma_{\omega}^2 I +
P_b^2 \nonumber
\end{equation}
from which it is evident that $P_a$ and $P_b$ are simultaneously
diagonalizable.  Thus, as $P_a$ and $P_b$ are positive definite
since the entire covariance matrix is positive definite,
(\ref{eqn:PcPaPb}) shows that $P_c$ is negative definite.

Identifying $K_P \equiv P_a$ and $K_I \equiv -P_c$ results in the
equations for the generalized attitude filter, which by Theorem
\ref{thm:asymstab} is asymptotically stable with a domain of
attraction $\mathcal{D}$.
\end{proof}

\section{Conclusions}
\label{sec:conclusion}

This paper introduces a very general class of attitude estimators
that contains Mahony's explicit nonlinear complementary filter, the
bias-free multiplicative extended Kalman filter (MEKF), and the
constant-gain MEKF as special cases. This generalized attitude
filter is a modification of Mahony's filter that simply replaces the
constant scalar gains by potentially time-varying matrix gains.  The
stability proofs developed for Mahony's filters extend naturally to
the generalized filter.  Thus, it is possible to prove the almost
global asymptotic stability of this general class of filters, and
consequently provide proof of the almost global asymptotic stability
of a few special cases of the MEKF. This generalized attitude filter
gives the filter designer an enormous space to tune and optimize the
filter, while ensuring stability.  It is important to note that this
generalized attitude filter does not include as a special case the
full MEKF, which may not necessarily have a positive definite
integral gain.

\section*{Appendix}

There is an unfortunate abundance of notation employed in this
paper, which results both from the nature of the work as well as an
attempt to display the results from the MEKF papers and the
nonlinear complementary paper in a compatible fashion.  The notation
most closely follows that in Mahony's paper \cite{Maho08}, with only
minor changes made to avoid conflicts with the other works. Table
\ref{tbl:notation} describes the major elements of the notation.

\begin{table}[!t]
\caption{Notation} \label{tbl:notation} \centering
\begin{tabular}{ll}
\hline
Notation & Interpretation \\
\hline
$a$ & True value \\
$\hat{a}$ & Estimated value \\
$\check{a}$ & Measured value \\
$\tilde{a}$ & Error between estimated and true value\\
$\langle a \rangle$ & Time-averaged expectation value\\
$A$ & Capital letters indicate matrices\\
$\tilde{A}$ & Left multiply error: $\tilde{A} A^T = \hat{A}^T$\\
$A_*$ & Equilibrium point \\
$A\inrt$ & Matrix in inertial frame: $A\inrt = C A C^T$\\
$\Ps(A)$, $\Pa(A)$ & Symmetric or anti-symmetric projection of $A$\\
$[A,B]$, $\{A,B\}$ & Commutator or anti-commutator of $A$ and $B$\\
$\v{x}$ & Bold lowercase letters indicate vectors\\
$\v{x}\inrt$ & Vector in inertial frame: $\v{x}\inrt = C \v{x}$\\
$\v{x}\crs$ & Matrix equivalent form of the cross product\\
\hline
\end{tabular}
\end{table}

This work also makes heavy use of a few identities and definitions
described below. The matrix equivalent form of the cross product,
also known as the cross operator, is defined by
\begin{equation*}
\vec{x}_{\times} \equiv \left[\begin{array}{ccc}
0 & -x_3 & x_2\\
x_3 & 0 & -x_1\\
-x_2 & x_1 & 0%
\end{array}\right].
\end{equation*}
The symmetric and anti-symmetric matrix operators are defined as
\begin{align*}
\Ps(A) &\equiv \frac{1}{2}(A + A^T)\\
\Pa(A) &\equiv \frac{1}{2}(A - A^T).
\end{align*}
The commutator and anti-commutator are defined as
\begin{align*}
[A,B] &\equiv A B - B A\\
\{A,B\} &\equiv A B + B A.
\end{align*}
Some useful identities for manipulating cross products and cross
operators are
\begin{subequations}
\begin{align}
(\v{x} \times \v{y})_{\times} &= [\v{x}\crs, \v{y}\crs] \label{eqn:crosscrossident}\\
Q \v{x} \times Q \v{y} &= (\det Q) Q^{-1} (\v{x} \times \v{y}) \label{eqn:matcross}\\
\v{x}^T \v{y} &= -\frac{1}{2} \tr \left( \v{x}\crs \v{y}\crs \right) \label{eqn:traceident}\\
\v{x}\crs \v{y}\crs &= \v{y}\v{x}^T - \v{y}^T\v{x} I
\end{align}
\label{eqn:crossidents}
\end{subequations}
where $Q$ in (\ref{eqn:matcross}) must be positive or negative
definite. The following identities are useful for expanding or
contracting matrices with the cross operator:
\begin{subequations}
\begin{align}
\Pa(A \v{x}\crs B) &= \frac{1}{2} (C \v{x})\crs \label{eqn:crident1}\\
S \v{x}\crs S^T &= (S \v{x})\crs \label{eqn:crident2}\\
A \v{x}\crs A &= (D \v{x})\crs \label{eqn:crident3}
\end{align}
\end{subequations}
where $A = S \Lambda_A S^T$, $B = S \Lambda_B S^T$, $C = S \Lambda_C
S^T$, and $D = S \Lambda_D S^T$ with $S \in SO(3)$ and where
\begin{align}
\Lambda_A &= \diag(\lambda_1,\lambda_2,\lambda_3) \nonumber \\
\Lambda_B &= \diag(\lambda'_1, \lambda'_2, \lambda'_3) \nonumber \\
\Lambda_C &= \diag(\lambda_2\lambda'_3 + \lambda'_2\lambda_3, \lambda_1\lambda'_3 +\lambda'_1\lambda_3, \lambda_1\lambda'_2 + \lambda'_1\lambda_2) \nonumber \\
\Lambda_D &= \diag(\lambda_2 \lambda_3, \lambda_1 \lambda_3,
\lambda_1 \lambda_2).\nonumber
\end{align}
Identities (\ref{eqn:crident2}) and (\ref{eqn:crident3}) are just
special cases of the first identity (\ref{eqn:crident1}).

\section*{Acknowledgments}
The author would like to thank Erik Chubb, Dr. Paula Echeverri,
Meagan Moore, Gabriel Popkin, Damon Vander Lind, and Dr. Corwin
Hardham for discussions on the generalized attitude filter and for
comments on the paper.

\section*{References}
\bibliographystyle{aiaa}
\bibliography{attitude}
\end{document}